\newtheorem{theorem}{Theorem}
\newtheorem{corollary}[theorem]{Corollary}
\newtheorem{lemma}[theorem]{Lemma}
\newtheorem{definition}[theorem]{Definition}
\newenvironment{proof*}{\vskip 2mm\noindent {}}{\hfill $\Box$ \vskip 2mm}
\def\C{\mathbb C}
\def\D{\mathbb D}
\def\dl{\delta}
\title[\hskip-2mm The Bergman kernel and the Carath\'eodory--Eisenman volume]
{Comparison of the Bergman kernel and the Carath\'eodory--Eisenman volume}
\author{Nikolai Nikolov and Pascal J.~Thomas}
\address{N. Nikolov\\ Institute of Mathematics and Informatics\\Bulgarian Academy
of Sciences\\Acad. G. Bonchev 8, 1113 Sofia, Bulgaria\newline
\indent Faculty of Information Sciences\\
State University of Library Studies and Information Technologies\\
Shipchenski prohod 69A, 1574 Sofia, Bulgaria}\email{nik@math.bas.bg}
\address{P.J. Thomas\\
Institut de Mathématiques de Toulouse; UMR5219 \\
Universit\'e de Toulouse; CNRS \\
UPS, F-31062 Toulouse Cedex 9, France} \email{pascal.thomas@math.univ-toulouse.fr}
\keywords{Bergman kernel, Azukawa metric, Carath\'eodory metric, Carath\'eodory--Eisenman volume}
\subjclass[2010]{32F45}
\thanks{The first named author is partially supported by the Bulgarian National Science Fund,
Ministry of Education and Science of Bulgaria under contract DN 12/2. This paper was started
while his was visiting the Paul Saba\-tier University, Toulouse in November 2018
as a guest professor.}
\begin{document}

\begin{abstract} It is proved that for any domain in $\C^n$ the Carath\'eodo\-ry--Eisenman
volume is comparable with the volume of the indicatrix of the Carath\'eodory metric
up to small/large constants depending only on $n.$ Then the ``multidimensional Suita
conjecture" theorem of B\l ocki and Zwonek implies a comparable relationship between
these volumes and the Bergman kernel.
\end{abstract}

\maketitle

In recent years, the interest in holomorphically invariant objects has
grown from quantities stemming from maps to or from the one-dimensional  disc
to quantities related to the $n$-dimensional ball. The main
focus of interest has been the squeezing function, which measures how big
the one-to-one image of a domain can be while remaining inside the unit ball
(and sending a base point to the origin of the ball).

The Carath\'eodo\-ry--Eisenman
``volume" is a variant on that idea, at the infinitesimal level.
Let $\Bbb D\subset\C$ be the unit disc.
Given $D$ be a domain in $\C^n,$ and $z\in D$,
$$
CE_D(z)=\sup\{|\det F'(z)|^2: F\in\mathcal O(D,\D^n)\}.
$$
We are using the polydisc $\D^n$ for technical reasons.
Replacing it by the unit ball in $\C^n,$
we get the same function up to small/large constants independent of $D$.

Unfortunately, the lack of a higher-dimensional analogue to the Koebe quarter theorem
prevents us from relating our results to the squeezing function, but
the behaviour of $CE_D(z)$ can be related to some basic geometric objects associated
to the domain. We need more definitions.

\begin{definition}
Let $D$ be a domain in $\C^n,$
$z,w\in D,$ and $X\in\C^n.$ The pluricomplex Green function $g_D,$
the Azukawa metric $A_D$ and the Carath\'eodory metric $C_D$
are defined in the following way:
$$
g_D(z,w)=\sup\{u(w): u\in\mbox{PSH}(D), u<0, u(\zeta)<\log||\zeta-z||+C\},
$$
$$
A_D(z;X)=\limsup_{\lambda\to 0}\frac{\exp(g_D(z,z+\lambda
X))}{|\lambda|},
$$
$$
C_D(z;X)=\sup\{|f'(z)X|: f\in\mathcal O(D,\D)\}.
$$
\end{definition}

Let $L_h^2(D)$ be the Bergman space of $D$,
i.e. the Hilbert space of all square-integrable holomorphic functions $f$ on $D$.
Let $K_D$ be the
restriction to the diagonal of the Bergman kernel of $D$. Recall that
$$
K_D(z)=\sup\{|f(z)|^2:f\in L_h^2(D), ||f||_{L^2(D)}\le 1\}.
$$

Denote by $\dl_D(z;X)$ the distance from $z$ to $\partial D$ along the vector $X$:
$$
\delta_D(z;X):= \sup\{r>0: z+\lambda X\in D\mbox{ if }|\lambda|<r\}.
$$
Observe that $\dl^{-1}_D(z;X)$ is $1$-homogeneous in $X$, that is,
$$\dl^{-1}_D(z; \mu X) = |\mu| \dl^{-1}_D(z;X),\quad\mu \in \mathbb C,$$
so it is this quantity that we want to compare to the various
infinitesimal metrics that occur in complex analysis.

Let $I_D(z)$ be the indicatrix of $\dl^{-1}_D(z;\cdot),$ that
is,
$$
I_D(z)=\{X\in\Bbb C^n:\dl^{-1}_D(z;X)<1\}.
$$
Then $z+I_D(z)$ is the maximal
balanced subdomain of $D$ centered at $z.$

Set $IA_D$ and $IC_D$ to be the indicatrices of $A_D$ and $C_D.$
Note that $IC_D(z)$ is a convex set.

\begin{definition}
\label{vol}
Denote $V_D,$ $VA_D$ and $VC_D$ the Euclidean volumes of $I_D,$ $IA_D$ and $IC_D,$
respectively.
\end{definition}

Since $C_D\le A_D\le\dl^{-1}_D,$ then $I_D\subset IA_D\subset IC_D$ and hence
\begin{equation}
\label{comp}
V_D\le VA_D\le VC_D.
\end{equation}

On the other hand, if $G$ is a balanced domain, then $(1,f)_{L_2(G)}=0$ for any
$f\in L_2(G)$ with $f(0)=0.$ Therefore $K_G(0)=(\mbox{Vol}\ G)^{-1}$. In particular,
applying this to $G= z+I_D(z) \subset D$,
$$
K_D(z)\le(V_D(z))^{-1}.
$$

The following opposite inequality is called the multidimensional
Suita conjecture (see \cite[Theorem 7.5]{Blo} and \cite[Theorem 2]{BZ}).

\begin{theorem}\label{S} If $D$ is a pseudoconvex domain in $\Bbb C^n,$
then $$K_D\ge(VA_D)^{-1}.$$
\end{theorem}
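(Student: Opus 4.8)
The plan is to reconstruct the estimate of B\l ocki and B\l ocki--Zwonek by combining a \emph{sharp} lower bound for $K_D(z)$ in terms of sublevel sets of the pluricomplex Green function with a rescaling argument that identifies the limit of those sublevel sets with the Azukawa indicatrix. Write $g:=g_D(z,\cdot)$. Since $D$ is pseudoconvex, $g$ is plurisubharmonic and negative on $D$ with a logarithmic pole at $z$, and for $t<0$ the sublevel set $G_t:=\{w\in D:g(w)<t\}$ is again pseudoconvex: if $\rho$ is a plurisubharmonic exhaustion of $D$, then $\max\bigl(\rho,\tfrac1{t-g}\bigr)$ is one of $G_t$, the reciprocal $\tfrac1{t-g}$ of the positive plurisuperharmonic function $t-g$ being plurisubharmonic. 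The $G_t$ increase to $D$ as $t\uparrow 0$, so $K_{G_t}(z)\to K_D(z)$ (Ramadanov); and after reducing, by exhausting $D$ with bounded pseudoconvex subdomains, to the case $D\subset z+R\mathbb{B}$ ($\mathbb{B}$ the unit ball), the monotonicity $g\ge g_{z+R\mathbb{B}}(z,\cdot)=\log(\|\cdot-z\|/R)$ of the Green function yields $G_t\subset z+Re^{t}\mathbb{B}$, and in particular $\lambda_n(G_t)<\infty$.

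The analytic core is the sharp inequality
\[
K_D(z)\ \ge\ \frac{e^{2nt}}{\lambda_n(G_t)}\qquad(t\le 0),
\]
$\lambda_n$ denoting Lebesgue measure; equivalently, $t\mapsto\log K_{G_t}(z)+2nt$ is non-decreasing, a quantity that is constant precisely when $D$ is balanced and centred at $z$ --- the expected equality case. This is B\l ocki's estimate \cite[Theorem 7.5]{Blo}: it follows from the Ohsawa--Takegoshi $L^2$-extension theorem \emph{with the optimal constant} (established by B\l ocki and by Guan--Zhou), or equivalently from the plurisubharmonic-variation method of Berndtsson--Lempert. Heuristically one produces $F\in\mathcal O(D)$ with $F(z)=1$ by solving a $\bar\partial$-equation against the plurisubharmonic weight $2n\max(g,t)$, which equals the constant $2nt$ on $G_t$, the sharp solvability constant giving exactly $\|F\|_{L^2(D)}^2\le e^{-2nt}\lambda_n(G_t)$. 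I expect this to be the main obstacle: the analogue with a dimensional constant $c_n$ in place of $1$ is comparatively soft (classical Ohsawa--Takegoshi, or Herbort's estimate), but only the optimal constant makes the limit below come out equal to $(VA_D(z))^{-1}$ rather than $c_n(VA_D(z))^{-1}$.

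Finally one lets $t\to-\infty$. For a unit vector $X$ the function $\mathbb{C}\ni\zeta\mapsto g(z+\zeta X)-\log|\zeta|$ is subharmonic and bounded above near $\zeta=0$, hence extends subharmonically across $0$ with value $\log A_D(z;X)$ there; moreover, for pseudoconvex $D$ the $\limsup$ in the definition of $A_D$ is a genuine limit and $A_D$ is continuous (Zwonek), so $g(z+sY)-\log s\to\log A_D(z;Y)$ as $s\to0^{+}$ for every $Y$, by $1$-homogeneity of $A_D(z;\cdot)$. Thus the rescaled sets $e^{-t}(G_t-z)=\{Y:g(z+e^{t}Y)-t<0\}$ converge, in the sense of indicator functions off the null set $\partial IA_D(z)=\{A_D(z;\cdot)=1\}$ (null because the sets $\{A_D(z;\cdot)=c\}$, $c>0$, are pairwise disjoint and scale by $c^{2n}$), to $\{Y:A_D(z;Y)<1\}=IA_D(z)$; as they all lie in the fixed ball $R\mathbb{B}$, dominated convergence gives
\[
e^{-2nt}\lambda_n(G_t)=\lambda_n\bigl(e^{-t}(G_t-z)\bigr)\ \longrightarrow\ \lambda_n(IA_D(z))=VA_D(z).
\]
Combining this with the displayed estimate and letting $t\to-\infty$ yields $K_D(z)\ge(VA_D(z))^{-1}$ for bounded $D$; the reduction is then undone since $K_{D_j}(z)\to K_D(z)$ and $VA_{D_j}(z)\uparrow VA_D(z)$ along a bounded exhaustion $D_j\uparrow D$. (One may of course assume throughout that $VA_D(z)<\infty$, the inequality being trivial otherwise.)
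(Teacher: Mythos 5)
Your proposal is correct and takes essentially the same approach as the paper, which in fact gives no proof of Theorem \ref{S} but quotes it from \cite[Theorem 7.5]{Blo} and \cite[Theorem 2]{BZ}: your combination of the sharp sublevel-set estimate $K_D(z)\ge e^{2nt}/\lambda_n(\{g_D(z,\cdot)<t\})$ (Ohsawa--Takegoshi with optimal constant, resp.\ Berndtsson--Lempert) with Zwonek's convergence of the rescaled sublevel sets to the Azukawa indicatrix is precisely the argument of those references. The only nitpick is that in undoing the bounded exhaustion $D_j\uparrow D$ you need only the trivial monotonicity $VA_{D_j}\le VA_D$ together with Ramadanov, not the stronger (and unjustified) claim $VA_{D_j}\uparrow VA_D$.
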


\begin{corollary}\label{c1} If $D$ is a domain in $\C^n$ and $A_D\ge c\dl^{-1}_D$
for some $c>0,$ then
$$
c^{-2n}(VA_D)^{-1}\ge(V_D)^{-1}\ge K_D\ge(VA_D)^{-1}.
$$
\end{corollary}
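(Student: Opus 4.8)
My plan is to read the displayed chain as three separate inequalities, two of which are already in hand, so that the only genuine work is the leftmost one. The middle inequality $(V_D(z))^{-1}\ge K_D(z)$ is precisely the estimate $K_D(z)\le(V_D(z))^{-1}$ proved just before the statement of Theorem~\ref{S}, via $K_G(0)=(\mathrm{Vol}\,G)^{-1}$ applied to the balanced domain $G=z+I_D(z)\subset D$, so I would simply quote it. The rightmost inequality $K_D\ge(VA_D)^{-1}$ is Theorem~\ref{S} itself; to be entitled to invoke it I would first observe that the hypothesis $A_D\ge c\,\dl_D^{-1}$ already forces $D$ to be pseudoconvex. The idea here is that every plurisubharmonic function on $D$ — hence $g_D(z,\cdot)$, and hence $A_D(z;\cdot)$ — is determined by the envelope of holomorphy $\hat D\supseteq D$, so $A_D=A_{\hat D}$ on $D$; if $\partial D$ met $\hat D$, then near such a point $\dl_D^{-1}$ would blow up in the direction of the nearest boundary point of $D$ while $A_D=A_{\hat D}$ stays bounded, contradicting the hypothesis. (Alternatively one may just read ``pseudoconvex'' into the hypotheses, matching Theorem~\ref{S}.)

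It remains to establish the leftmost inequality $c^{-2n}(VA_D(z))^{-1}\ge(V_D(z))^{-1}$, equivalently $V_D(z)\ge c^{2n}\,VA_D(z)$. Here I would fix $z\in D$ and use that $A_D(z;\cdot)$ and $\dl_D^{-1}(z;\cdot)$ are both absolutely $1$-homogeneous: from $A_D(z;X)\ge c\,\dl_D^{-1}(z;X)$ one gets, for every $X$ with $A_D(z;X)<c$, that $\dl_D^{-1}(z;X)<1$, i.e.\ $X\in I_D(z)$; and by homogeneity $\{X:A_D(z;X)<c\}=c\cdot IA_D(z)$. Hence $c\cdot IA_D(z)\subseteq I_D(z)$, and since a dilation by $c$ in $\C^n$ scales the Euclidean ($2n$-dimensional) volume by $c^{2n}$, this gives $c^{2n}\,VA_D(z)=\mathrm{Vol}(c\cdot IA_D(z))\le\mathrm{Vol}(I_D(z))=V_D(z)$, as wanted.

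This last step uses only homogeneity and monotonicity of Lebesgue measure under inclusion, so no regularity of $A_D$ or of the indicatrices is needed; apart from Theorem~\ref{S} itself, the corollary is thus an elementary consequence of the inclusions among the various indicatrices. Consequently I expect the only slightly delicate point to be the reconciliation described above — namely deducing pseudoconvexity of $D$ from the hypothesis (or else simply imposing it), which is what lets one apply Theorem~\ref{S} to obtain the final inequality of the chain.
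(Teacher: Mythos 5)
Your reduction of the chain is the right one, and two of the three inequalities are handled exactly as in the paper: the middle one is the pre-Theorem~\ref{S} estimate $K_D\le (V_D)^{-1}$, and the leftmost one follows, as you say, from homogeneity: $A_D\ge c\,\dl_D^{-1}$ gives $c\cdot IA_D(z)\subset I_D(z)$, hence $V_D\ge c^{2n}VA_D$. That part is correct (the paper leaves it implicit). The genuine gap is precisely at the point you yourself single out as delicate: deducing pseudoconvexity of $D$ from $A_D\ge c\,\dl_D^{-1}$, which is in fact the \emph{only} thing the paper proves for this corollary. Your mechanism --- ``every plurisubharmonic function on $D$ is determined by the envelope of holomorphy $\hat D$, so $A_D=A_{\hat D}$, which stays bounded near a pseudoconcave boundary point'' --- is not a fact you can invoke. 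Bounded \emph{holomorphic} functions do extend to the envelope with the same sup norm (so such an argument has a chance for $C_D$ in Corollary~\ref{c2}), but plurisubharmonic functions, hence the Green function $g_D(w,\cdot)$ and the Azukawa metric, are not known to be governed by the envelope of holomorphy: extension of general psh functions to $\hat D$ is a nontrivial (and in this generality not citable) matter, the envelope need not be schlicht so ``$\hat D\supseteq D$'' and ``$\partial D$ meets $\hat D$'' do not literally make sense, and even a local extension across a Hartogs figure $H\subset D$ need not agree with the original competitor at points of $D\cap\hat H$ lying outside $H$ --- which is exactly where you want to evaluate $A_D(w;X)$, since the pole $w$ sits near the bad boundary point. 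So the claim ``$A_D$ stays bounded there'' is unsubstantiated; note that the paper is careful \emph{not} to claim boundedness.

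What the paper does instead is quantitative and avoids envelopes entirely: if $D$ is not pseudoconvex, then by \cite[Theorem 4.1.25]{Hor} one may assume $0\in\partial D$ and $D\supset D_{c,r}$ (a concrete local model), and by the proof of \cite[Theorem 1.1]{Kra} the Kobayashi--Royden metric satisfies $\kappa_{D_{c,r}}((-\delta,0);e_1)=O(\delta^{-3/4})$; since $A_D\le\kappa_D\le\kappa_{D_{c,r}}$ on $D_{c,r}$ while $\dl_D((-\delta,0);e_1)\le\delta$, the hypothesis $A_D\ge c\,\dl_D^{-1}$ forces $c\,\delta^{-1}=O(\delta^{-3/4})$, a contradiction as $\delta\to0^+$. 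You would need to supply an argument of this kind (or a genuine proof of the psh-extension claim) to close your gap; the parenthetical fallback of simply adding pseudoconvexity to the hypotheses proves a weaker statement than the corollary, whose point is that the metric lower bound already implies pseudoconvexity.
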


Note that if $D$ is $\Bbb C$-convex, resp. convex, then $A_D\ge C_D\ge c\dl^{-1}_D$ with
$c=1/4$, resp. $c=1/2$ (see e.g. \cite[Proposition 1]{NPZ}, resp. the remark
after this proposition). Thus, Corollary \ref{c1} applies to those cases and we
reobtain  $K_D \le c^{-2n}(VA_D)^{-1}$ as in
\cite[Theorem 5]{BZ}.

The aim of this note is to prove a version of Theorem \ref{S} for $CE_D,$
comparing it to the volume of the Carath\'eodory indicatrix (see Definition
\ref{vol}).

\begin{theorem}\label{CE} Let $D$ be a domain in $\Bbb C^n.$ There are constants
$C_n>c_n>0$ depending only on $n$ such that
$$
C_n(VC_D)^{-1}\ge CE_D\ge c_n(VC_D)^{-1}.
$$

In particular, if $D$ is pseudoconvex, then by Theorem \ref{S} and \eqref{comp},
$$C_n K_D\ge CE_D.$$
\end{theorem}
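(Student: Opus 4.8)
Fix $z\in D$; after translating we may take $z=0$ and write $B=IC_D(0)$. Since $C_D(0;\cdot)$ is a seminorm, $B$ is an open, convex, balanced set, and if it is unbounded then $VC_D(0)=\infty$, so the lower bound is vacuous (and, as we shall see, the upper bound is automatic); hence I may assume $C_D(0;\cdot)$ is a norm and $B$ a bounded convex balanced domain. The first ingredient is a duality remark: the set $S:=\{f'(0):f\in\mathcal O(D,\D),\ f(0)=0\}$ of admissible differentials is convex (average two competitors), balanced, and closed (a normal families argument together with the maximum principle), so by the bipolar theorem it is exactly the polar $\{\ell\in(\C^n)^\ast:|\ell(X)|\le1\ \text{for all }X\in\overline B\}$ of $\overline B$. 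In other words, a linear functional $\ell$ is the differential at $0$ of some $f\in\mathcal O(D,\D)$ with $f(0)=0$ if and only if $|\ell(X)|\le1$ on $\overline B$.

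Next I reformulate $CE_D(0)$ in linear-algebraic terms. Given $F=(f_1,\dots,f_n)\in\mathcal O(D,\D^n)$, post-composing each $f_j$ with the automorphism of $\D$ carrying $f_j(0)$ to $0$ multiplies $|\det F'(0)|$ by $\prod_j(1-|f_j(0)|^2)^{-1}\ge1$, so in the supremum defining $CE_D(0)$ one may restrict to maps with $F(0)=0$. For such $F$ the rows of $L:=F'(0)$ lie in $S$, hence $L(B)\subseteq\D^n$; conversely, the duality remark shows that every $L\in\mathrm{GL}_n(\C)$ with $L(B)\subseteq\D^n$ arises in this way. Since the real Jacobian of a complex-linear $L$ equals $|\det_{\C}L|^2$, this yields the identity
$$
CE_D(0)\cdot VC_D(0)=\sup\{\mathrm{Vol}(L(B)):L\in\mathrm{GL}_n(\C),\ L(B)\subseteq\D^n\},
$$
from which the upper bound is immediate with $C_n=\mathrm{Vol}(\D^n)=\pi^n$.

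The heart of the matter is the lower bound: I must exhibit one $L$ with $L(B)\subseteq\D^n$ and $\mathrm{Vol}(L(B))$ bounded below by a dimensional constant, and a crude rescaling of $B$ does not suffice because $B$ may be arbitrarily eccentric. For this I would invoke John's theorem. Let $\mathcal E\subseteq\overline B$ be the unique maximal-volume inscribed ellipsoid. Because $\overline B$ is invariant under the rotations $X\mapsto e^{i\theta}X$, uniqueness forces $\mathcal E$ to be invariant under them too, hence a complex ellipsoid $\mathcal E=M(\mathbb B^n)$ with $M\in\mathrm{GL}_n(\C)$ (where $\mathbb B^n$ is the Euclidean ball); and since $\overline B$ is centrally symmetric in $\mathbb R^{2n}$, John's inclusion gives $\overline B\subseteq\sqrt{2n}\,\mathcal E$. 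Then $L:=(2n)^{-1/2}M^{-1}$ maps $\overline B$ into $\mathbb B^n\subseteq\D^n$, while $\mathrm{Vol}(L(B))\ge\mathrm{Vol}(L(\mathcal E))=(2n)^{-n}\mathrm{Vol}(\mathbb B^n)=\pi^n/((2n)^n\,n!)$, so $c_n=\pi^n/((2n)^n\,n!)$ works. I expect the one genuinely delicate point to be precisely this step — verifying that the John ellipsoid of the circled body $B$ is a \emph{complex} ellipsoid, so that $M^{-1}$ is $\C$-linear and $L$ really defines a holomorphic map into $\D^n$, and keeping track of the John constant (a sharper complex version of John's theorem would improve $c_n$, but the crude factor $\sqrt{2n}$ is all that is needed here). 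Finally, for pseudoconvex $D$ the stated consequence $C_nK_D\ge CE_D$ follows by combining this theorem with the inequality $VA_D\le VC_D$ from \eqref{comp} and with Theorem \ref{S}.
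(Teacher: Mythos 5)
Your argument is correct, but it takes a genuinely different route from the paper's. The paper works with an adapted orthonormal basis from \cite{NPZ}: setting $r_j(a)=C_D(a;e_j)$, it sandwiches $C_D(a;\cdot)$ between constant multiples of the weighted $\ell^1$-norm $\sum_j r_j(a)|X_j|$, constructs a competitor $F\in\mathcal O(D,\D^n)$ with $|\det F'(a)|\ge (k_n)^n\prod_j r_j(a)$ by a cofactor recursion on Carath\'eodory extremal functions, bounds $|\det F'(a)|\le n!\prod_j r_j(a)$ by expanding the determinant, and finally compares $VC_D(a)$ with the volume of the diamond $E_a$. You instead linearize the whole problem: Schwarz--Pick normalization plus a Montel/bipolar duality identifies the set of admissible differentials $f'(0)$ with the polar of $\overline{IC_D(0)}$, which converts $CE_D\cdot VC_D$ into the purely convex-geometric quantity $\sup\{\mathrm{Vol}(L(B)):L\in\mathrm{GL}_n(\C),\ L(B)\subseteq\D^n\}$; the upper bound is then trivial with $C_n=\pi^n$, and the lower bound follows from John's theorem, the key point (which you correctly isolate) being that uniqueness and invariance of $\overline B$ under $X\mapsto e^{i\theta}X$ force the John ellipsoid's quadratic form to commute with the complex structure, hence to be Hermitian, so the ellipsoid is a complex one and $M^{-1}$ is $\C$-linear. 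Your route is self-contained modulo John's theorem, avoids citing \cite[Proposition 14 and (4)]{NPZ}, and gives clean explicit constants $C_n=\pi^n$, $c_n=\pi^n/((2n)^n\,n!)$; the paper's route is more elementary (no John ellipsoid, no bipolar theorem), produces concrete extremal maps, and yields the comparisons of $VC_D$ with $\Pi_D^{-2}$ and with inscribed/circumscribed polydisc volumes that feed the closing Remark. Two small points you should make explicit: in the degenerate case ($IC_D(0)$ unbounded) the upper bound needs the one-line remark that a vector $X\ne 0$ with $C_D(0;X)=0$ is annihilated by $F'(0)$ for every competitor $F$, so $CE_D(0)=0$; and since John's inclusion only gives $L(\overline B)\subseteq\overline{\mathbb B^n}$, note either that $L(B)$ is open (hence lies in the open ball) or simply that the rows of $L$ have modulus at most $1$ on $\overline B$ and then apply your duality; neither affects the validity of the proof. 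The final deduction $C_nK_D\ge CE_D$ from Theorem \ref{S} and \eqref{comp} is exactly as in the paper.
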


\begin{corollary}\label{c2} If $D$ is domain in $\C^n$ and $C_D\ge c\dl^{-1}_D$
for some $c>0,$ then
$$C_nK_D\ge CE_D\ge c^{2n}c_nK_D.$$
\end{corollary}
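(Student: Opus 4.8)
The plan is to read off both inequalities from Theorem~\ref{CE}, using the hypothesis $C_D\ge c\,\dl^{-1}_D$ to compare $VC_D$ with $V_D$, hence with $K_D$. First, this hypothesis says exactly that $C_D(z;X)<1$ implies $c\,\dl^{-1}_D(z;X)=\dl^{-1}_D(z;cX)<1$, i.e.\ $cX\in I_D(z)$; thus $IC_D(z)\subset c^{-1}I_D(z)$, and therefore $VC_D(z)\le c^{-2n}V_D(z)$, the volume scaling by $c^{-2n}$ in $\mathbb R^{2n}$. Combining this with the elementary bound $K_D\le(V_D)^{-1}$ recorded in the introduction gives $(VC_D)^{-1}\ge c^{2n}(V_D)^{-1}\ge c^{2n}K_D$, and feeding this into the lower estimate of Theorem~\ref{CE} yields
\[
 CE_D\ \ge\ c_n\,(VC_D)^{-1}\ \ge\ c^{2n}c_n\,K_D ,
\]
which is the right-hand inequality. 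Only Theorem~\ref{CE} and $K_D\le(V_D)^{-1}$ are used here, so no pseudoconvexity is needed; at a point where $VC_D=\infty$ the inclusion forces $V_D=\infty$, hence $K_D=0$, and the inequality is trivially true there.

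For the left-hand inequality $C_nK_D\ge CE_D$ I would use the upper estimate of Theorem~\ref{CE}, $CE_D\le C_n(VC_D)^{-1}$, so that it remains to check $(VC_D)^{-1}\le K_D$. This is precisely the multidimensional Suita inequality Theorem~\ref{S}: by \eqref{comp} one has $VA_D\le VC_D$, hence $K_D\ge(VA_D)^{-1}\ge(VC_D)^{-1}$ as soon as $D$ is pseudoconvex. So the only non-formal point is to observe that the hypothesis $C_D\ge c\,\dl^{-1}_D$ with $c>0$ already forces $D$ to be pseudoconvex. Indeed, were $D$ not pseudoconvex, the Kontinuit\"atssatz would make every bounded holomorphic function on $D$ --- in particular every $f\in\mathcal O(D,\D)$ --- extend holomorphically to $D\cup W$ for some open set $W$ meeting $\partial D$ at a point $p$; then $C_D(z;\cdot)$, being at most the Carath\'eodory metric of $D\cup W$, would stay bounded on the unit sphere as $z\to p$, whereas $\dl^{-1}_D(z;X)\to\infty$ for $X$ aimed from $z$ at the nearest point of $\partial D$ --- contradicting $C_D\ge c\,\dl^{-1}_D$.

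Thus the argument is essentially formal: everything reduces to Theorem~\ref{CE} together with the inclusion $IC_D\subset c^{-1}I_D$ and the bound $K_D\le(V_D)^{-1}$, and the one genuinely analytic ingredient is the implication ``$C_D\ge c\,\dl^{-1}_D\Rightarrow D$ pseudoconvex'', which is also the tacit reason Corollary~\ref{c1} holds in the stated generality. I expect that implication to be the main --- and essentially the only --- obstacle; granting it, the two halves combine to give $C_nK_D\ge CE_D\ge c^{2n}c_nK_D$.
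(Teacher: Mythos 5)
Your reduction of the corollary to Theorem~\ref{CE} is exactly the paper's (which leaves it implicit): the chain $CE_D\ge c_n(VC_D)^{-1}\ge c^{2n}c_n(V_D)^{-1}\ge c^{2n}c_nK_D$, coming from the inclusion $IC_D\subset c^{-1}I_D$ and the elementary bound $K_D\le(V_D)^{-1}$, together with $CE_D\le C_n(VC_D)^{-1}\le C_n(VA_D)^{-1}\le C_nK_D$ once $D$ is pseudoconvex; so, as in the paper, the only substantive point is that $C_D\ge c\,\dl_D^{-1}$ forces pseudoconvexity. There you take a genuinely different route. The paper argues quantitatively: if $D$ is not pseudoconvex then, by H\"ormander's Theorem 4.1.25, after an affine change of coordinates $D$ contains the model domain $D_{c,r}$ attached to $0\in\partial D$, and Krantz's estimate $\kappa_{D_{c,r}}((-\delta,0);e_1)=O(\delta^{-3/4})$ combined with $C_D\le A_D\le\kappa_D\le\kappa_{D_{c,r}}$ contradicts $C_D((-\delta,0);e_1)\ge c\,\dl_D^{-1}((-\delta,0);e_1)\ge c/\delta$. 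You argue qualitatively via Hartogs extension of bounded holomorphic functions, which is more elementary and avoids both citations, but two caveats are in order. First, your statement is a little too strong: the Kontinuit\"atssatz does not in general give a single-valued extension to $D\cup W$ agreeing with $f$ on all of $D$ (the classical multivaluedness issue when $D\cap W$ is disconnected), so ``$C_D\le C_{D\cup W}$'' is not literally available; what the construction does give is an open set $W$ meeting $\partial D$ at some $p$, extensions $\tilde f\in\mathcal O(W)$ with $\sup_W|\tilde f|\le\sup_D|f|\le 1$ (maximum principle along the analytic discs, or the Cauchy integral over circles lying in a compact subset of $D$), and $\tilde f=f$ on a component $V$ of $D\cap W$ whose closure contains $p$; a Cauchy estimate on $\tilde f$ then bounds $|f'(z)|$, hence $C_D(z;\cdot)$ on unit vectors, uniformly in $f$ for $z\in V$ near $p$, which is all you need against the blow-up of $\dl_D^{-1}$ — so the gap is repairable but should be closed. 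Second, because your argument lives on bounded holomorphic functions, it only covers the hypothesis $C_D\ge c\,\dl_D^{-1}$ of this corollary; the paper's Kobayashi-metric argument works under the weaker hypothesis $A_D\ge c\,\dl_D^{-1}$ and thus proves Corollary~\ref{c1} at the same time, so your remark that your implication is also the tacit reason for Corollary~\ref{c1} is not accurate as stated.
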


\noindent{\it Proof of Corollaries \ref{c1} and \ref{c2}}. We only have to show that
under the assumption $A_D\ge c\dl^{-1}_D$ (resp. $C_D\ge c\dl^{-1}_D$),
$D$ is pseudoconvex. Suppose it is not. By \cite[Theorem 4.1.25]{Hor}, after an affine change
of coordinates, we may suppose that $0\in\partial D$ and
$$
D\supset D_{c,r}:=\{z\in r\D^2:\mbox{Re} z_1+(\mbox{Re} z_2)^2<c(\mbox{Im} z_2)^2\},\quad c>1,\ r>0.
$$

Recall that the Kobayashi-Royden metric of a domain $G$ in $\C^n$ is given by
$$
\kappa_G(z;X)=\inf\{|\alpha|:\exists\varphi\in\mathcal O(\mathbb
D,G):\varphi(0)=z,\alpha\varphi'(0)=X\}.
$$
It follows by the proof of \cite[Theorem 1.1]{Kra} that
$$
\limsup_{\delta\to 0+}\delta^{3/4}\kappa_{D_{c,r}}((-\delta,0);e_1)<\infty.
$$
Then the inequalities $C_D\le A_D\le \kappa_D\le \kappa_{D_{c,r}}$ lead to a contradiction.

\begin{proof*}{\it Proof of Theorem \ref{CE}}.
Let $a\in D.$ Note that $D_a:=IC_D(a)$ is a
convex balanced domain centered at $a,$ and hence
\begin{equation}
\label{balind}
C_D(a;X)=C_{D_a}(a;X)=\dl^{-1}_{D_a}(a;X).
\end{equation}
The proof of \cite[Proposition 14]{NPZ} rests on the construction,
in a $\mathbb C$-convex domain $D$,
of an orthonormal basis $e_1,\dots,e_n$ of $\C^n$ such that
for $1 \le j \le n$,
$$
\dl_{D}(a;e_j) = \mbox{dist}\left(a, a+\mbox{Span}(e_k, k\ge j) \setminus D \right).
$$
Since $D_a$ is convex, using \eqref{balind}, we deduce from \cite[(4)]{NPZ}  that
one may find a constant $k_n>0$ depending only on $n$
such that
\begin{equation}\label{1}
k_n\sum_{j=1}^n|X_j|r_j(a)\le C_D(a;X)\le\sum_{j=1}^n|X_j|r_j(a),
\end{equation}
where the $X_j$ are the coordinates of $X$ in the basis $e_1,\dots,e_n$ and
$r_j(a)=C_D(a;e_j).$

Let $\Pi_D(a)=\prod_{j=1}^n r_j(a)$.

\begin{lemma}
There exists a map $F=(f_1,\dots,f_n) \in \mathcal O(D,\D)$ such that
$$|\det F'(a)| \ge (k_n)^n\Pi_D(a).$$
\end{lemma}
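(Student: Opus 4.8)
The plan is to prove the Lemma as the technical heart of Theorem \ref{CE}, by recasting $\sup\{|\det F'(a)|:F\in\mathcal O(D,\mathbb D^n)\}$ as the maximum of a multilinear form and exploiting convexity. Write
$$
S:=\{f'(a):f\in\mathcal O(D,\mathbb D)\}\subset(\mathbb C^n)^*,
$$
so that by definition $C_D(a;X)=\sup_{\ell\in S}|\ell(X)|$. For $F=(f_1,\dots,f_n)\in\mathcal O(D,\mathbb D^n)$ the quantity $|\det F'(a)|$ equals $|\det(\ell_1,\dots,\ell_n)|$, where $\ell_j=f_j'(a)\in S$ and $\det(\ell_1,\dots,\ell_n)$ denotes the determinant of the matrix with rows $\ell_1,\dots,\ell_n$ expressed in the orthonormal basis $e_1,\dots,e_n$ (this agrees with the ordinary Jacobian determinant up to a unimodular factor, since the change of basis is unitary). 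Hence it suffices to show
$$
\max\{|\det(\ell_1,\dots,\ell_n)|:\ell_1,\dots,\ell_n\in S\}\ge k_n^n\,\Pi_D(a).
$$

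Next I would observe that $S$ is compact: it is bounded by the Cauchy estimates (using $|\ell(X)|\le C_D(a;X)\le\delta_D^{-1}(a;X)<\infty$), and closed by a normal families argument combined with the maximum principle (a locally uniform limit of maps $D\to\mathbb D$ either maps into $\mathbb D$ or is constant, in which case the limiting derivative is $0\in S$). Therefore the continuous function $(\ell_1,\dots,\ell_n)\mapsto|\det(\ell_1,\dots,\ell_n)|$ attains its maximum on $S^n$ at some tuple $(\ell_1^\star,\dots,\ell_n^\star)$; writing $\ell_j^\star=(f_j^\star)'(a)$, the map $F=(f_1^\star,\dots,f_n^\star)$ realizes the maximum, so it only remains to bound that maximum below. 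Moreover, since for fixed $\ell_2,\dots,\ell_n$ the map $\ell_1\mapsto|\det(\ell_1,\dots,\ell_n)|$ is the modulus of a linear functional, hence convex, its maximum over $S$ equals its maximum over $\operatorname{conv}(S)$; iterating over the remaining slots gives
$$
\max_{S^n}|\det(\cdot)|=\max_{\operatorname{conv}(S)^n}|\det(\cdot)|.
$$

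It remains to find $n$ functionals in $\operatorname{conv}(S)$ with large determinant. I would first identify $\operatorname{conv}(S)=B^*:=\{\ell\in(\mathbb C^n)^*:|\ell(X)|\le C_D(a;X)\text{ for all }X\in\mathbb C^n\}$: the inclusion $\operatorname{conv}(S)\subseteq B^*$ is immediate since $S\subseteq B^*$ and $B^*$ is closed and convex, while for the reverse inclusion any $\ell_0\in B^*\setminus\operatorname{conv}(S)$ would, by Hahn--Banach separation together with the fact that $S$ is circled ($e^{i\theta}f\in\mathcal O(D,\mathbb D)$), yield an $X_0$ with $|\ell_0(X_0)|>\sup_{\ell\in S}|\ell(X_0)|=C_D(a;X_0)$, a contradiction. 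Now the left-hand inequality in \eqref{1} shows that for each $j$ the functional $\ell_j^0:=k_n\,r_j(a)\,e_j^*$ (with $e_j^*(X)=X_j$) satisfies $|\ell_j^0(X)|=k_nr_j(a)|X_j|\le k_n\sum_{k=1}^n r_k(a)|X_k|\le C_D(a;X)$, hence $\ell_j^0\in B^*=\operatorname{conv}(S)$. The matrix with rows $\ell_1^0,\dots,\ell_n^0$ in the basis $e_1,\dots,e_n$ is $\operatorname{diag}(k_nr_1(a),\dots,k_nr_n(a))$, of determinant modulus $k_n^n\prod_{j=1}^n r_j(a)=k_n^n\Pi_D(a)$. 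Combining the displays finishes the proof and produces $F$.

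The genuine obstacle — and the reason one cannot simply take $f_j$ extremal for $C_D(a;e_j)$ — is that such an $f_j$ gives no control on its derivative in the directions $e_k$, $k\ne j$, so the off-diagonal entries of $F'(a)$ could collapse the determinant; it is precisely the multilinearity of $\det$ combined with the convexity of $B^*$ (equivalently, the bipolar identification $\operatorname{conv}(S)=B^*$) that sidesteps this, letting the ``diagonal'' functionals $k_nr_j(a)e_j^*\in\operatorname{conv}(S)$ do the work even though no single map need realize any of them.
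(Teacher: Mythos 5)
Your proof is correct, but it takes a genuinely different route from the paper's. The paper constructs $F$ recursively: $f_1$ is extremal for $C_D(a;\cdot)$ in the direction $e_1$, and at step $m+1$ the function $f_{m+1}$ is taken extremal in the direction of the cofactor vector $V$ of the $m\times m$ minor already built, so that the new $(m+1)\times(m+1)$ determinant equals $f_{m+1}'(a)V$; its modulus $C_D(a;V)$ is then bounded below by $k_n|V_{m+1}|r_{m+1}(a)$ using the left half of \eqref{1}, which propagates the estimate and yields the same constant $(k_n)^n$. You instead linearize the problem: you pass to the set $S$ of derivatives $f'(a)$, identify $\operatorname{conv}(S)$ with the dual ball $B^*=\{\ell\in(\C^n)^*:|\ell(X)|\le C_D(a;X)\ \forall X\}$ by a bipolar/Hahn--Banach argument (legitimate because $S$ is circled and compact, so in finite dimension its convex hull is compact, which is what strict separation needs), use multilinearity of the determinant to show that the maximum of $|\det|$ over $S^n$ equals the maximum over $\operatorname{conv}(S)^n$, and then place the diagonal functionals $k_nr_j(a)e_j^*$ in $B^*$ --- again exactly by the left half of \eqref{1}. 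So both arguments hinge on the same estimate \eqref{1}; the paper's cofactor recursion is constructive and entirely elementary (extremal functions only, no normal families and no separation theorem), whereas your argument is softer and more conceptual: it isolates the linear-algebra fact that a multilinear form has the same supremum over a product of sets as over the product of their convex hulls, and it explains structurally why the naive choice of coordinate-extremal functions fails --- which is precisely the difficulty the paper's cofactor trick circumvents by hand. Two small points you should state explicitly to make the writeup airtight: closedness of $\operatorname{conv}(S)$ (Carath\'eodory's theorem on convex hulls of compact sets in finite dimension) before invoking strict separation, and the attainment of the maximum over $S^n$ by compactness, so that the bound is realized by an actual map $F\in\mathcal O(D,\D^n)$ as the Lemma requires.
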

\begin{proof}
Let $f_1 \in \mathcal O(D,\D)$ be extremal for the Carath\'eodory metric in the $e_1$
direction, thus $\left| \frac{\partial f_1}{\partial z_1}(a)\right|=|f_1'(a)e_1|= r_1(a)$.
We proceed recursively: suppose we already have chosen
$f_i \in \mathcal O(D,\D)$, $1\le i \le m$, such that
$$
\left|
\det\left( \frac{\partial f_i}{\partial z_j}(a)
\right)_{\substack{ {1\le i \le m} \\ {1\le j \le m} }}
\right|
\ge (k_n)^m \prod_{j=1}^m r_j(a).
$$
Then define $V\in \C^{m+1}$ to be the vector of cofactors
$$
V_j:= (-1)^{m+1+j} \det \left( \frac{\partial f_i}{\partial z_l}(a)
\right)_{\substack{ {1\le i \le m} \\ {1\le l \le m+1, l\neq j} }}, 1\le j \le m+1.
$$
Choose $f_{m+1} \in \mathcal O(D,\D)$ to be extremal for the Carath\'eodory metric in the $V$
direction, so that $|f_{m+1}'(a)V|= C_D(a;V)$. By our choice of $V$,
$$
\det\left( \frac{\partial f_i}{\partial z_j}(a)
\right)_{\substack{ {1\le i \le m+1} \\ {1\le j \le m+1} }}
= \sum_{j=1}^{m+1} V_j \frac{\partial f_{m+1}}{\partial z_j}(a) = f_{m+1}'(a)V.
$$
By \eqref{1} and the recursion assumption,
\begin{multline*}
C_D(a;V) \ge k_n \sum_{j=1}^{m+1} |V_j|r_j(a) \ge k_n  |V_{m+1}|r_{m+1}(a)
\\
=  k_n  r_{m+1}(a) \left| \det\left( \frac{\partial f_i}{\partial z_j}(a)
\right)_{\substack{ {1\le i \le m} \\ {1\le j \le m} }}
 \right| \ge (k_n)^{m+1} \prod_{j=1}^{m+1} r_j(a).
\end{multline*}
\end{proof}

On the other hand, from the definition of the Carath\'eodory
metric, for any map $F\in\mathcal O(D,\D^n)$ one has that
$$|\det F'(a)|\le n!\Pi_D(a).$$

It follows that
\begin{equation}\label{23}
(k_n)^{2n}\le CE_D(a).(\Pi_D(a))^{-2}\le(n!)^2.
\end{equation}

Now we compare $\Pi_D(a)^{-2}$ with $VC_D(a)$. Define the diamond domain
$$
E_a:=\{z\in\C^n:\sum_{j=1}^nr_j(a)|z_j-a_j|<1\},
$$
the  inequalities \eqref{1} imply that $k_nD_a\subset E_a\subset D_a$, and so
\begin{equation}
\label{4}
k_n^{2n}VC_D(a)\le\mbox{Vol}(E_a)=\frac{(2\pi)^n}{(2n)!(\Pi_D(a))^2}\le VC_D(a).
\end{equation}
Combining \eqref{23} and \eqref{4}, we get that
$$
(k_n)^{2n}\le\frac{(2n)!}{(2\pi)^n}CE_D(a)VC_D(a)\le\frac{(n!)^2}{(k_n)^{2n}}.
$$
\end{proof*}

\noindent{\bf Remark.} Let $P_D(a)=1/\min\Pi_D(a),$ where the minimum is taken over all
orthonormal bases of $\C^n.$ Denote by $V_D^i(a)$ and $V_D^e(a)$ the maximal volume
of a polydisc $\Delta_a$ centered at $a$ such that $\Delta_a\subset D_a,$
respectively the minimal volume of a polydisc $\Delta_a$ centered at $a$ such that
$\Delta_a\supset D_a.$ It follows from the proof above that the functions
$V_D^i, V_D^e, VE_D, P_D$ and $(CE_D)^{-1}$ are equal up to small/large constants
depending only on $n.$

{}

\end{document}